\newcommand{\rmQ}{\mbox{\rm Q}}
\newcommand{\ovrmQ}{\ov{\mbox{\rm Q}}}
\newcommand{\rmd}{\mbox{\rm d}}
\newcommand{\mbR}{{\mathbb R}}
\newcommand{\mbN}{{\mathbb N}}
\newcommand{\cX}{{\mathcal X}}
\newcommand{\BL}{\mathop{\rm BL}}
\newcommand{\ov}{\overline}
\newcommand{\vf}{\varphi}
\newcommand{\ve}{\varepsilon}
\theoremstyle{plain}
\newtheorem{thm}{Theorem}
\newtheorem{lem}{Lemma}
\newtheorem{cor}{Corollary}
\begin{document}\large
\renewcommand{\proofname}{Proof}
\begin{center}
Andriy YURACHKIVSKY\\[0,5cm]
{\bf A Criterion for Precompactness in the Space of Hypermeasures
\footnote{This is the translation, with minor amendments, of the
communication published in Ukrainian in {\it Dopovidi
Natsionalno\u{\i}i Akademii Nauk Ukrainy} [{\it Reports Nat. Acad.
Sci. Ukr.}], 2006, No.9, p.38--41.}}
\end{center}
\vskip0,5cm
\begin{quote}
{\small{\bf Abstract.} Let $\rmQ$ denote the space of signed
measures on the Borel $\sigma$-algebra of a separable complete
space $X$. We endow $\rmQ$ with the norm $\|q\|=\sup|\int\vf\rmd
q|$, where the supremum is taken over all Lipschitz with constant
1 functions whose module does not exceed unity. This normed space
is incomplete provided $X$ is infinite and has at least one limit
point. We call its completion the space of hypermeasures.
Necessary and sufficient conditions for precompactness (=relative
compactness) of a set of hyper\-meas\-ures are found. They are
similar to those of Prokhorov's and Fernique's theorems for
measures.}
\end{quote}

\vskip0,5cm {\small{\bf Keywords:} completion, hypermeasure,
quasicontinuous functional, equiquasicontinuity, precompactness,
tightness.}

\vskip0,5cm

Let $(X, \rho)$ be a metric space. We introduce the notation:
$\cX$ -- the $\sigma$-algebra of Borel sets in $X;$
$$
J(f)=\sup_{x,y\in X,\ x\ne y}\frac{|f(x)-f(y)|}{\rho(x,y)};
$$
$$
\Phi\equiv\Phi(X,\rho)=\{\vf\in\mbR^X:\ \forall\ x,y\in X \
|\vf(x)|\leq1 \ \& \ |\vf(x)-\vf(y)|\leq\rho(x,y)\};
$$
$\BL\equiv\BL(X, \rho)=\mathop{\cup}\limits_{a>0}a\Phi$ -- the
class of all bounded Lipschitz functions on $X; \
\rmQ\equiv\rmQ(X)$ -- the set of all charges (=signed measures) on
$\cX;$ for $q\in\rmQ$ \ $qf=\int f\rmd q$ $(f\in {\rm L}_1(X,\cX,
|q|)$, the integration is performed over $X$),
$\|q\|_\rho=\mathop{\sup}\limits_{\vf\in\Phi(X,\rho)}|q\vf|$.

The class BL contains all the functions
$\vf_{x_0}(x)=\rho(x_0,x)/(1+\rho(x_0,x))$ and therefore separates
the points of $X$. Then Lemma 1 \cite{U} asserts that
$\|\cdot\|_\rho$ is a norm in $\rmQ$ provided the space $(X,
\rho)$ is complete and separable. We denote the completion of
$\rmQ$ w.r.t. this norm by $\ovrmQ$ and call its elements {\it
hypermeasures}. This definition does not imply that the space
$\rmQ$ is incomplete -- but it will be so if the set $X$ is
infinite and has at least one limit point \cite[p. 246]{Bog}. (In
\cite{U} the last condition was missed.) Every hypermeasure can be
realized as a linear functional on $\BL$ \cite{U}. So we will say
``hypermeasure on $\BL(X)$''. The value of a hypermeasure $t$ on a
function $\vf$ will be denoted, in the same manner as for charges,
$t\vf.$ The convergence in $\ovrmQ$ is that w.r.t. the norm.

The goal of this communication is to find the necessary and
sufficient conditions for precompactness (=relative compactness)
of an arbitrary set of hyper\-meas\-ures. For measures, such
conditions are provided by the classical Prokhorov's theorem
\cite{VTCh} widely used in probability theory. A generalization of
this theorem for Lusin spaces was proved by Fernique \cite{F}.
\begin{thm}
\label{thm:1} Let the space $X$ be separable and complete. Then
for any convergent sequence $(t_n)$ of hypermeasures and any
uniformly bounded pointwise converging to zero sequence  $(\vf_n)$
of functions from $\BL$ such that $\\ \sup\limits_n
J(\vf_n)<\infty$ the relation $t_n\vf_n\to0$ holds.
\end{thm}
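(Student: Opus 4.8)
\medskip
\noindent\emph{Proposed proof.}
The plan is to reduce everything to the completeness of $\ovrmQ$, the density of genuine charges in it, and the dominated convergence theorem. Since $\ovrmQ$ is complete and $(t_n)$ converges, there is a hypermeasure $t$ with $\|t_n-t\|_\rho\to0$. Write $t_n\vf_n=(t_n-t)\vf_n+t\vf_n$; it then suffices to prove (i)~$(t_n-t)\vf_n\to0$ and (ii)~$t\vf_n\to0$. Throughout I shall use the elementary estimate $|s\vf|\le c\,\|s\|_\rho$, valid for every $s\in\ovrmQ$ and every $\vf\in\BL$ with $\sup_X|\vf|\le c$ and $J(\vf)\le c$: indeed then $\vf/c\in\Phi$, the inequality $|q\vf|\le c\|q\|_\rho$ holds for charges $q$ straight from the definition of $\|\cdot\|_\rho$, and it passes to the limit along a Cauchy sequence of charges representing $s$, the realization of $s$ as a functional on $\BL$ (\cite{U}) being, by construction of the completion, compatible with the linear operations and with norm limits. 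By hypothesis $c:=\max\{\sup_nJ(\vf_n),\ \sup_n\sup_X|\vf_n|\}<\infty$, so $|s\vf_n|\le c\,\|s\|_\rho$ uniformly in $n$ for every $s\in\ovrmQ$.

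Claim (i) is then immediate: $|(t_n-t)\vf_n|\le c\,\|t_n-t\|_\rho\to0$. Claim (ii) is the real point of the theorem, and it is where one cannot argue directly, because $t$ need not be (represented by) a measure and so the dominated convergence theorem does not apply to $t$ itself. Instead I approximate: given $\ve>0$, choose a charge $q\in\rmQ$ with $\|t-q\|_\rho<\ve/(2c)$, which is possible since $\rmQ$ is dense in $\ovrmQ$. Then $|t\vf_n|\le|(t-q)\vf_n|+|q\vf_n|\le c\,\|t-q\|_\rho+\bigl|\textstyle\int\vf_n\,\rmd q\bigr|<\ve/2+\bigl|\textstyle\int\vf_n\,\rmd q\bigr|$. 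For the fixed finite signed measure $q$, the functions $\vf_n$ tend to $0$ pointwise and are dominated by the constant $c\in{\rm L}_1(X,\cX,|q|)$, whence $\int\vf_n\,\rmd q\to0$ by dominated convergence, so $|t\vf_n|<\ve$ for all large $n$. As $\ve$ is arbitrary, $t\vf_n\to0$, and combining with (i) gives $t_n\vf_n\to0$.

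The only genuine obstacle is the absence of an integral representation for a general hypermeasure, which is precisely what makes step (ii) nontrivial and forces the two--term $\ve/2+\ve/2$ passage through a nearby charge; once that device is in place the rest is routine. For completeness one should also record the auxiliary facts invoked above — that for $s\in\ovrmQ$ the functional $\vf\mapsto s\vf$ on $\BL$ is linear, that $(s_1-s_2)\vf=s_1\vf-s_2\vf$, and that $|s\vf|\le\|s\|_\rho$ for $\vf\in\Phi$ — all of which follow by taking limits in the corresponding relations for charges, the limits existing because $|q\vf-q'\vf|\le\|q-q'\|_\rho$ whenever $\vf\in\Phi$.
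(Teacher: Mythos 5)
Your proof is correct and follows essentially the same route as the paper: you split $t_n\vf_n=(t_n-t)\vf_n+t\vf_n$, bound the first term by $c\,\|t_n-t\|_\rho$, and handle the second by approximating $t$ with a charge $q$ (density of $\rmQ$ in $\ovrmQ$) and applying dominated convergence to $q\vf_n$. The only difference is presentational — you give a direct $\ve/2$ argument where the paper argues the stationary case by contradiction — so there is nothing substantive to add.
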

\begin{proof}
Consider first a stationary sequence of hypermeasures:
$t_n=t\in\ovrmQ$. Let $(\vf_n)$ be such a sequence of functions
from $\BL$ that $$M\equiv
\sup\limits_n\left(\sup\limits_x|\vf_n(x)|
+J(\vf_n)\right)<\infty$$ and $\vf_n\rightarrow0.$ Assume that
$t\vf_n\not\rightarrow0.$ Then there exists a number $a>0$ such
that $|t\vf_k|\geq2a$ for infinitely many $k$. By the definition
of hypermeasure there exists a charge $q$ such that
\begin{equation}
\label{eq:1} \|t-q\|<\ve.
\end{equation}
By the dominated convergence theorem $q\vf_n\to0,$ so
$|q\vf_k-t\vf_k|>a$ for infinitely many $k$. On the other hand,
$|q\vf_k-t\vf_k|\leq M\|q-t\|,$ which together with the previous
inequality yields $\|q-t\|> a/M$. This contradiction with
\eqref{eq:1} proves the theorem in the case $t_n=t.$

Let now the sequence $(t_n)$  converge to some hypermeasure $t.$
By the choice of $(\vf_n)$,  $\vf_n/M\in \Phi$ for all  $n$. So
$|(t_n-t)\vf_n|\leq M\|t_n-t\|\to0.$ Now, the identity
$t_n\vf_n=(t_n-t)\vf_n+t\vf_n$ reduces the general case to that
considered above.
\end{proof}

Theorem \ref{thm:1} asserts the property of hypermeasures somewhat
weaker (because of condition \eqref{eq:1}) than the continuity
property of  charges. Let us formulate it in a more general form.

We say that a set $T$  of linear functionals on $\BL(X)$ is {\it
equiquasicontinuous} if for any $\ve>0$  there exists a Tykhonov's
neighborhood of zero $U\in\mbR^X$ such that
\begin{equation}
\label{eq:2} \sup_{t\in T,\ \vf\in U\cap\Phi}|t\vf|<\ve.
\end{equation}
If herein $T$ is a singleton: $T=\{t\}$, then the functional $t$
will be called {\it quasicontinuous}. In this terminology, Theorem
\ref{thm:1} asserts that every convergent sequence of
hypermeasures is equiquasicontinuous, in particular, every
hypermeasure is a quasicontinuous functional.
\begin{lem}
\label{lem:1'} Let the space $X$ be separable and complete. Then
in order that a set $T$ of hyper\-mea\-sures on $\BL(X)$ be
equiquasicontinuous it is necessary and sufficient that for any
sequence $(t_n)\in T^{\mbN}$ and any pointwise converging to zero
sequence $(\vf_n)\in\Phi^{\mbN}$ the relation $t_n\vf_n\to0$ hold.
\end{lem}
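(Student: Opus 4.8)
The plan is to prove the two implications separately. Necessity is a direct unwinding of the definitions; sufficiency I would obtain by contraposition, and it is there that separability of $X$ does the real work.

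\emph{Necessity.} Assume $T$ is equiquasicontinuous and take $(t_n)\in T^{\mbN}$ and $(\vf_n)\in\Phi^{\mbN}$ with $\vf_n\to0$ pointwise. Given $\ve>0$, choose a Tykhonov neighbourhood of zero $U$ realizing \eqref{eq:2}; since every such neighbourhood contains a basic one and passing to a smaller $U$ only decreases the supremum in \eqref{eq:2}, I may assume $U=\{f\in\mbR^X:|f(y_j)|<\delta_j,\ j=1,\dots,m\}$ for finitely many $y_j\in X$ and $\delta_j>0$. Pointwise convergence then gives $N$ with $\vf_n\in U$ for $n\ge N$; as also $\vf_n\in\Phi$, \eqref{eq:2} yields $|t_n\vf_n|<\ve$ for $n\ge N$. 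Hence $t_n\vf_n\to0$.

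\emph{Sufficiency (by contraposition).} Fix a countable dense set $D=\{x_1,x_2,\dots\}\subset X$ and put $U_k=\{f\in\mbR^X:|f(x_i)|<1/k,\ i=1,\dots,k\}$. The step I expect to be the main obstacle is the following reduction: $\{U_k\cap\Phi:k\in\mbN\}$ is a neighbourhood basis of zero for the subspace $\Phi$ of $\mbR^X$ in the Tykhonov topology. To see it, take a basic neighbourhood $U=\{f:|f(y_j)|<\delta_j,\ j=1,\dots,m\}$, pick $x_{i(j)}\in D$ with $\rho(y_j,x_{i(j)})<\delta_j/2$, and choose $k$ so large that $i(j)\le k$ and $1/k<\delta_j/2$ for all $j\le m$; then for $\vf\in U_k\cap\Phi$ the Lipschitz bound $|\vf(y_j)|\le|\vf(x_{i(j)})|+\rho(y_j,x_{i(j)})<1/k+\delta_j/2<\delta_j$ shows $\vf\in U$. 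This is exactly where the density of $D$ and the common Lipschitz constant of the members of $\Phi$ are needed, and it is what replaces first countability of $\mbR^X$, which fails when $X$ is uncountable.

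Granting this, suppose $T$ is not equiquasicontinuous. Then there is $\ve>0$ with $\sup\{|t\vf|:t\in T,\ \vf\in U_k\cap\Phi\}\ge\ve$ for every $k$, so one may select $t_k\in T$ and $\vf_k\in U_k\cap\Phi$ with $|t_k\vf_k|>\ve/2$. For each fixed $x_i\in D$ one has $|\vf_k(x_i)|<1/k$ whenever $k\ge i$, hence $\vf_k\to0$ on $D$; the estimate $|\vf_k(y)|\le|\vf_k(x_i)|+\rho(y,x_i)$, valid for $\vf_k\in\Phi$, together with density of $D$ propagates this to $\vf_k\to0$ pointwise on $X$. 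Thus $(t_k)\in T^{\mbN}$ and $(\vf_k)\in\Phi^{\mbN}$ meet the hypothesis of the stated condition while $t_k\vf_k\not\to0$, a contradiction. Therefore the condition forces $T$ to be equiquasicontinuous, which completes the argument. (Completeness of $X$ is used only through the standing framework that makes $\|\cdot\|_\rho$ a norm and realizes hypermeasures as functionals on $\BL$.)
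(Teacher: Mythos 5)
Your proof is correct and follows essentially the same route as the paper's: the necessity part is the same direct unwinding of the definition, and the sufficiency part is the same contraposition, with your $U_k$ built from a countable dense set making explicit the construction (and the pointwise convergence $\vf_k\to0$ via the Lipschitz bound) that the paper only sketches through the condition \eqref{eq:2'}. One remark: the neighbourhood-basis claim you flag as the main obstacle is not actually needed, since the negation of equiquasicontinuity already gives $\sup_{t\in T,\ \vf\in U_k\cap\Phi}|t\vf|\ge\ve$ for every neighbourhood, in particular for each $U_k$.
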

\begin{proof}
{\tt Sufficiency.} Separability of $X$ and the definition of the
class $\Phi$ imply existence of a decreasing sequence $(U_n)$ of
Tykhonov's neighborhoods of zero in $\mbR^X$ such that
\begin{equation}
\label{eq:2'} \mathop{\bigcap}\limits_n U_n\cap\Phi=\{0\}.
\end{equation}
If $T$  is not equiquasicontinuous, then one can find $\ve>0$ and,
for each $n$, a hypermeasure $t_n\in T$  together with a function
$\vf_n\in U_n\cap\Phi$ such that
\begin{equation}
\label{eq:3}
|t_n\vf_n| \geq\ve.
\end{equation}
Then the sequence $(\vf_n)$ pointwise converges to zero, but
$t_n\vf_n\not\rightarrow0.$

{\tt Necessity.} Let a set $T$ be equiquasicontinuous. Let us fix
$\ve>0$ and choose a Tykhonov's neighborhood of zero
$U\subset\mbR^X$ such that the inequality \eqref{eq:2} holds. If a
sequence  $(\vf_n)\in\Phi^{\mbN}$ pointwise converges to zero,
then ultimately all its members belong to $U\cap\Phi$ and
therefore for any sequence $(t_n)\in T^{\mbN}$ \ $|t_n\vf_n|<\ve$
if $n$ is sufficiently large.
\end{proof}
The following main result is an analogue of the above-mentioned
Fernique's theorem \footnote {This analogy was noticed by the
author when the Ukrainian original of the present communication
had been already published. The paper \cite{F} is not cited
there.} and a generalization of Prokhorov's theorem where the
tightness condition is re-formulated in Fernique's form
(equivalent to the classical one if $X$ is Polish).
\begin{thm}
\label{thm:2}Let the space $X$ be separable and complete. Then in
order that a set $T\subset\ovrmQ(X)$ be precompact it is necessary
and sufficient that it be bounded and equiquasicontinuous.
\end{thm}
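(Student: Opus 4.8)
The plan is to identify $\ovrmQ$ isometrically with a closed subspace of the space of bounded functions on the ``test set'' $\Phi$, to observe that $\Phi$ is a compact metric space in the topology of pointwise convergence, and then to deduce Theorem~\ref{thm:2} from the Arzel\`a--Ascoli theorem together with Theorem~\ref{thm:1} and Lemma~\ref{lem:1'}.

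\emph{Necessity.} A precompact subset of a normed space is bounded, so only equiquasicontinuity has to be checked, and by Lemma~\ref{lem:1'} it suffices to show that $t_n\vf_n\to0$ for every $(t_n)\in T^{\mbN}$ and every $(\vf_n)\in\Phi^{\mbN}$ with $\vf_n\to0$ pointwise. If this failed, then $|t_{n_k}\vf_{n_k}|\geq\ve$ for some $\ve>0$ and some subsequence; by precompactness I would pass to a further subsequence along which $(t_{n_k})$ converges in $\ovrmQ$, and then, since the corresponding $\vf_{n_k}$ are bounded by $1$, converge pointwise to $0$, and satisfy $\sup_k J(\vf_{n_k})\leq1$, Theorem~\ref{thm:1} would force $t_{n_k}\vf_{n_k}\to0$ --- a contradiction.

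\emph{Sufficiency.} Endow $\Phi$ with the topology of pointwise convergence. A pointwise limit of functions from $\Phi$ again lies in $\Phi$, so $\Phi$ is closed in $[-1,1]^X$ and hence compact; and since every $\vf\in\Phi$ is $1$-Lipschitz, pointwise convergence on $\Phi$ is governed by the values on a countable dense subset of $X$, so $(\Phi,\text{pointwise})$ is a compact \emph{metric} space. Map $t\in\ovrmQ$ to the function $\hat t$ on $\Phi$ given by $\hat t(\vf)=t\vf$; then $\sup_{\vf\in\Phi}|\hat t(\vf)|=\|t\|_\rho$, so $t\mapsto\hat t$ is an isometric embedding of $\ovrmQ$ into the Banach space $B(\Phi)$ of bounded functions on $\Phi$, and its range, being complete, is closed. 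The key point is that equiquasicontinuity of $T$ --- which is only equicontinuity of $\{\hat t:t\in T\}$ at the single point $0\in\Phi$ --- upgrades to genuine equicontinuity on all of $\Phi$: for $\vf,\psi\in\Phi$ one has $\tfrac12(\vf-\psi)\in\Phi$ and $\hat t(\vf)-\hat t(\psi)=2\,\hat t\bigl(\tfrac12(\vf-\psi)\bigr)$, so whenever $\vf$ is pointwise close to $\psi$ the function $\tfrac12(\vf-\psi)$ lies in a prescribed Tykhonov neighbourhood of $0$ and \eqref{eq:2} bounds $|\hat t(\vf)-\hat t(\psi)|$ uniformly in $t\in T$; the same computation with a singleton shows each $\hat t$ is continuous, so $\{\hat t:t\in T\}\subset C(\Phi)$. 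Boundedness of $T$ makes this family uniformly bounded, so by Arzel\`a--Ascoli it is precompact in $C(\Phi)\subset B(\Phi)$; its closure then lies in the (closed) range of $t\mapsto\hat t$, and transporting back through the isometry yields precompactness of $T$ in $\ovrmQ$.

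\emph{Main obstacle.} The substantive step is the equicontinuity upgrade just described; it works only because hypermeasures are linear and $\Phi$ is balanced and closed under $(\vf,\psi)\mapsto\tfrac12(\vf-\psi)$. A secondary technical point is the verification that the pointwise topology turns $\Phi$ into a compact metrizable space, for which separability of $X$ and the common Lipschitz bound on the members of $\Phi$ are precisely what is used.
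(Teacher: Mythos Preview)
Your proof is correct. The necessity half coincides with the paper's: both extract a convergent subsequence from a would-be counterexample and invoke Theorem~\ref{thm:1} to reach a contradiction (the paper phrases this without Lemma~\ref{lem:1'}, but the content is identical).

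For sufficiency the two routes diverge. The paper works by hand: it fixes a countable pointwise-dense $\Psi\subset\Phi$, uses boundedness and a diagonal argument to find a subsequence $(t_n,\ n\in S)$ along which every $t_n\psi$ ($\psi\in\Psi$) converges, then upgrades convergence to all $\vf\in\BL$ via equiquasicontinuity and the splitting
\[
|t_m\vf-t_n\vf|\le|t_m(\vf-\psi)|+|t_m\psi-t_n\psi|+|t_n(\psi-\vf)|,
\]
defines $t_0\vf=\lim t_n\vf$, and finally proves $\|t_n-t_0\|\to0$ by yet another sub-subsequence extraction on $\Phi$ together with Theorem~\ref{thm:1}. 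Your argument instead recognises the whole sufficiency step as an instance of Arzel\`a--Ascoli on the compact metric space $(\Phi,\text{pointwise})$, the crucial input being the observation that $\tfrac12(\vf-\psi)\in\Phi$ turns equiquasicontinuity (equicontinuity of $\{\hat t\}$ at $0\in\Phi$) into genuine equicontinuity on all of $\Phi$. This trick is implicitly present in the paper's triangle estimate above, but you isolate it cleanly. What your approach buys is conceptual transparency and brevity; what the paper's approach buys is self-containment (no appeal to Arzel\`a--Ascoli) and an explicit construction of the limiting hypermeasure $t_0$ before checking that it actually lies in $\ovrmQ$.
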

\begin{proof}
{\tt Necessity.} Let us take an arbitrary sequence $(U_n)$  of
Tykhonov's neighborhoods of zero with property \eqref{eq:2'}. If
the set $T$ is not equiquasicontinuous, then one can find $\ve>0$
such that any neighborhood $U_n$ does not satisfy condition
\eqref{eq:2}. And this means that for each $n$ there exist $t_n\in
T$ and $\vf_n\in U_n\cap\Phi$ such that inequality \eqref{eq:3}
holds.

By the choice of $(U_n)$  and condition \eqref{eq:2'} $\vf_n(x)\to
0$ for any $x$. Since the set $T$  is by assumption precompact,
the sequence $(t_n)$ contains a convergent subsequence $(t_n, n\in
S\subset\mbN).$ Then by theorem \ref{thm:1} $t_n\vf_n\to0$ as
$n\to\infty, n\in S, $ which contradicts to \eqref{eq:3}.

Necessity of boundedness is obvious.

{\tt Sufficiency.} Let us take an arbitrary countable dense subset
$\Psi$ of the set $\Phi.$ Since by assumption $\sup_{t\in
T}\|t\|<\infty,$ any sequence $(t_n)\in T^{\mbN}$ contains a
subsequence $(t_n, n\in S\subset\mbN)$ such that for any
$\psi\in\Psi$ the numeral sequence $(t_n\psi, n\in S)$ converges.
Hence and from equiquasicontinuity of $T,$ writing
$$
|t_m\vf-t_n\vf|\leq|t_m(\vf-\psi)|+|t_m\psi-t_n\psi| +|t_n(\psi-\vf)|,
$$
we deduce fundamentality and therefore convergence of the sequence
$(t_n\vf, n\in S)$ for any $\vf\in\BL.$ Let us denote
$t_0\vf=\mathop{\lim}\limits_{n\to\infty,\ n\in S}t_n\vf$ and show
that for any sequence $(\eta_n, n\in S)$ of functions from $\Phi$
\begin{equation}
\label{eq:4} t_n\eta_n-t_0\eta_n\to0 \ \mbox{as} \ n\to\infty,
n\in S.
\end{equation}

Separability of $X$ and the definition of $\Phi$ imply that for an
arbitrary infinite set $S_1\subset S$ there exist an infinite set
$S_2\subset S_1$ and a function $\eta\in\Phi$ such that for any
$x$ $\eta_n(x)\to\eta(x)$ as $n\to\infty, n\in S_2.$
Equuiquasicontinuity of $T$ and the definition of $t_0$ entail
quasicontinuity of the latter. Hence, writing
$$
t_n\eta_n-t_0\eta_n=t_n(\eta_n-\eta)+t_n\eta-t_0\eta+t_0(\eta-\eta_n)
$$
and recalling once again the definition of $t_0,$  we get by
Theorem \ref{thm:1} relation \eqref{eq:4} with $S_2$  instead of
$S.$ Since the infinite set $S_1\subset S$ which $S_2$ was
extracted from is arbitrary, it holds for $S_2=S$, too. And this
in view of arbitrariness of the sequence $(\eta_n)\in\Phi^S$ means
that the sequence $(t_n, n\in S)$ is fundamental and therefore
(recall that the space $\ovrmQ$ is by construction complete)
convergent.

So, any sequence in $T$  contains a convergent subsequence, which
means that $T$ is precompact.
\end{proof}

Theorem \ref{thm:2} is applicable, in particular, to measures
(here and below -- finite). Let us show that in his case it turns
to the above-mentioned Prokhorov's criterion. We say, somewhat
extending the notion of tightness,  that a set $T$ of measures on
${\cX}$ is {\it tight}, if for any $\ve
>0$ there exists a completely bounded set $K\subset X$  such that
for all $q\in T\quad q(X\setminus K)<\ve$. If $X$ is complete,
then this definition is equivalent to the conventional one
demanding compactness of $K$.

\begin{lem}
\label{lem:2}Let $T$ be a bounded tight set of measures on the
$\sigma$-algebra of Borel sets in a metric space $X$. Then $T$ is
equiquasicontinuous.
\end{lem}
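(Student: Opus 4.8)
The plan is, given $\ve>0$, to produce an explicit basic Tykhonov neighbourhood of zero in $\mbR^X$ — one of the form $U=\{\vf\in\mbR^X:\ |\vf(x_i)|<\delta,\ i=1,\dots,k\}$ with $x_1,\dots,x_k\in X$ and $\delta>0$ — for which \eqref{eq:2} holds. Recall that here ``measure'' means a finite nonnegative Borel measure, so that $|q\vf|\le\int|\vf|\,\rmd q$ and the relevant uniform bound is $M:=\sup_{q\in T}q(X)<\infty$ (boundedness of $T$); we may assume $M>0$, the case $M=0$ being trivial.

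First I would use tightness to discard the mass lying far out. Choose a completely bounded set $K\subset X$ with $q(X\setminus K)<\ve/2$ for every $q\in T$; replacing $K$ by its closure (still completely bounded, and now closed, hence Borel, with $q(X\setminus\ov K)\le q(X\setminus K)$) we may assume $K$ closed. Since every $\vf\in\Phi$ satisfies $|\vf|\le1$ on all of $X$, for each $q\in T$ and $\vf\in\Phi$
$$
|q\vf|\ \le\ \int_K|\vf|\,\rmd q+\int_{X\setminus K}|\vf|\,\rmd q\ \le\ \Big(\sup_{x\in K}|\vf(x)|\Big)\,M+\frac{\ve}{2},
$$
so it remains only to force $\sup_{x\in K}|\vf(x)|<\ve/(2M)$ by imposing finitely many pointwise constraints on $\vf$.

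This last reduction is the only step where something genuinely must be exploited, namely complete boundedness of $K$: put $\delta=\eta=\ve/(4M)$ and cover $K$ by finitely many balls $B(x_1,\eta),\dots,B(x_k,\eta)$ with centres $x_i\in K$. If $\vf\in\Phi$ and $|\vf(x_i)|<\delta$ for all $i$, then for any $x\in K$, choosing $i$ with $\rho(x,x_i)<\eta$ and using that $\vf$ is $1$-Lipschitz gives $|\vf(x)|\le|\vf(x_i)|+\rho(x,x_i)<\delta+\eta=\ve/(2M)$. Hence, with $U=\{\vf\in\mbR^X:\ |\vf(x_i)|<\delta,\ i=1,\dots,k\}$ — a Tykhonov neighbourhood of zero — the displayed estimate gives $|q\vf|<\ve$ for all $q\in T$ and all $\vf\in U\cap\Phi$, which is exactly \eqref{eq:2}; thus $T$ is equiquasicontinuous. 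The main point is really the conversion of a finite $\eta$-net of $K$ into a genuine basic neighbourhood of zero via the Lipschitz condition, the uniform tail estimate being the measure-theoretic shadow of Prokhorov's compact containment; beyond that the argument is just bookkeeping of the $\ve$-splitting.
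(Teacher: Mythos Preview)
Your proof is correct and follows essentially the same route as the paper's: split off the tail using tightness and $|\vf|\le1$, then on the completely bounded piece $K$ convert a finite $\eta$-net into a basic Tykhonov neighbourhood via the $1$-Lipschitz bound on $\Phi$. Your version is in fact a bit more careful than the paper's, which leaves the reduction to $\int_K$ implicit and does not address measurability of $K$; your passage to the closure and explicit $\ve/2$-splitting fill those small gaps without changing the argument.
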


\begin{proof}
It suffices to show that for any $\ve >0$ and completely bounded
set $K\subset X$ there exists a Tykhonov's neighborhood of zero
$U\subset \mbR ^X$ such that for all $q\in T$ ³ $\vf\in U\cap\Phi$
the inequality $\left|\int_K \vf\rmd q\right|<\ve$ holds.

By assumption there exists $C>0$ such that $q(X)\le C$ for all
$q\in T$. So $U$ will possess the required property if
$$\sup\limits_{\vf\in U\cap\Phi}\sup\limits_{x\in K}|\vf(x)|<\frac{\ve}{C}\ .$$
Let us take an existing by the choice of $K$ finite set $A\subset
X$ such that each point $x\in K$ is less than $\ve /2C$ apart from
some point $a(x)\in A$. Then, by the definition of the class
$\Phi$, for any $\vf\in\Phi$ and $x\in K\quad
|\vf(x)-\vf(a(x))|<\ve /2C$, so that one may put $U=\{f\in\mbR
^X:\forall a\in A\ |f(a)|<\ve /2C\}$.
\end{proof}

It is known \cite{H}, that the metric $\Lambda
(q_1,q_2)\equiv||q_1-q_2||$ induces the $\ast$-weak convergence in
the space of measures on $\cX$ (in \cite{H}, the subspace of
probability measures is considered, but the same argument applies
to the whole space). Consequently, precompactness of a set $T$ of
measures on $\cX$ is tantamount to the following property: for
each sequence $(q_n)\in T^{\mbN}$ there exist a measure $q$ íà
$\cX$ and an infinite set $S\subset\mbN$ such that for any
$f\in{\rm C_b}(X)\quad q_nf\to qf$ as $n\to\infty ,\ n\in S$. Then
Prokhorov's theorem in the necessity part asserts that under the
condition of completeness and separability of $X$ any precompact
w.r.t. the norm of the space $\rmQ$ set of measures is tight. This
together with Theorem \ref{thm:2} and  Lemma \ref{lem:2} leads us
to the following conclusion.

\begin{cor}
\label{cor:1} Let the space $X$ be separable and complete. Then in
order that a bounded set of measures on $\cX$ be
equiquasicontinuous it is necessary and sufficient that it be
tight.
\end{cor}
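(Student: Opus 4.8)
The plan is to deduce Corollary \ref{cor:1} directly from Theorem \ref{thm:2}, Lemma \ref{lem:2}, and the classical Prokhorov theorem, which together supply both implications. For \emph{sufficiency}, suppose a bounded set $T$ of measures is tight; then Lemma \ref{lem:2} immediately gives that $T$ is equiquasicontinuous, and there is nothing more to do. The whole content of the corollary therefore lies in the \emph{necessity} direction: a bounded equiquasicontinuous set of measures must be tight.

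For necessity I would argue as follows. Let $T$ be bounded and equiquasicontinuous. By Theorem \ref{thm:2} (applied with the ambient space $\ovrmQ(X)$, of which $\rmQ(X)$ is a subspace) the set $T$ is precompact in $\ovrmQ$; since all its members are genuine measures, $T$ is in particular precompact with respect to the norm $\|\cdot\|_\rho$ restricted to $\rmQ$. By the cited result of \cite{H}, this norm metrizes the $\ast$-weak topology on the space of measures, so precompactness of $T$ in $\rmQ$ is exactly relative sequential $\ast$-weak compactness: every sequence in $T$ has a subsequence converging $\ast$-weakly to some finite Borel measure. Now invoke the necessity half of Prokhorov's theorem, valid because $X$ is separable and complete: a set of measures that is relatively $\ast$-weakly compact (equivalently, bounded and with this sequential compactness property) is tight in the classical sense, i.e. for every $\ve>0$ there is a \emph{compact} $K\subset X$ with $q(X\setminus K)<\ve$ for all $q\in T$. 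Since a compact set is in particular completely bounded, $T$ is tight in the (slightly weaker) sense adopted in the excerpt, and necessity follows.

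The only delicate point — and the step I would treat most carefully — is the passage between the two notions of precompactness: ``precompact in $\ovrmQ$'' (the hypothesis delivered by Theorem \ref{thm:2}) versus ``relatively $\ast$-weakly compact in the space of measures'' (the form in which Prokhorov's theorem is stated). The bridge is that $\rmQ$ sits isometrically inside its completion $\ovrmQ$, so a subset of $\rmQ$ that is precompact in $\ovrmQ$ is precompact in $\rmQ$ as a metric space (its closure in $\ovrmQ$ is compact, but one must note that a Cauchy sequence of \emph{measures} need not have a \emph{measure} as its limit — here it does, precisely because the identification of $\|\cdot\|_\rho$ with the $\ast$-weak metric via \cite{H} forces any $\ast$-weak limit point to be a finite measure). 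Once that identification is in hand, ``precompact in $\rmQ$'' unwinds verbatim to the sequential $\ast$-weak statement quoted just before the corollary, and Prokhorov's theorem applies with no further work. Everything else is routine: Lemma \ref{lem:2} is quoted as a black box for sufficiency, and the implication ``tight $\Rightarrow$ bounded'' needed to match hypotheses is trivial since $q(X)=q(K)+q(X\setminus K)\le q(K)+\ve$ with $K$ completely bounded.
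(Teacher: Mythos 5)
Your proposal is correct and takes essentially the same route as the paper: sufficiency is exactly Lemma \ref{lem:2}, and necessity combines Theorem \ref{thm:2} with the cited metrization of $\ast$-weak convergence by the norm $\Lambda$ and the necessity half of Prokhorov's theorem, just as in the paragraph preceding the corollary. The delicate point you single out --- that a norm-precompact set of measures has only measures (not proper hypermeasures) as limit points, so that Prokhorov's theorem applies --- is handled no more explicitly in the paper than in your sketch, so your argument matches the paper's in both structure and level of detail.
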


\vskip0,5cm

\end{document}